\newcommand{\junk}[1]{}
\newtheorem{theorem}{Theorem}
\newtheorem{corollary}{Corollary}
\newtheorem{lemma}{Lemma}
\newtheorem{definition}{Definition}
\newtheorem{observation}{Observation}
\begin{document}

\title{Reachability and Recurrence in a Modular Generalization of Annihilating Random Walks (and lights-out games) to hypergraphs}

\author{Gabriel Istrate}\thanks{ Department of Computer Science, West University of Timi\c{s}oara and -Austria Research Institute, Bd. V. P\^{a}rvan 4, cam. 045
B, Timi\c{s}oara, RO-300223, Romania. email: {\tt gabrielistrate@acm.org }}

\begin{abstract}
We study a discrete asynchronous dynamical system on hypergraphs that can be regarded as a natural extension of annihilating walks along two directions: first, the interaction topology is a hypergraph; second, the 
``number of particles`` at a vertex of the hypergraph is an element of a finite ring ${\bf Z}_{p}$ of integers modulo an odd  number  
$p\geq 3$. Equivalently particles move on a hypergraph, with a moving particle at a vertex being replaced by one indistinguishable copy at each neighbor in a 
given hyperedge; particles at a vertex collectively annihilate when their number reaches $p$.  
 
The boolean version of this system arose in earlier work \cite{istrate-balance} motivated by 
 the statistical physics of social balance \cite{redner-balance,antal2005dynamics}, generalizes certain lights-out games \cite{sutner-lightsout} 
to finite fields and also has applications to the complexity of local search procedures for satisfiability. 
 
Our result shows that under a liberal sufficient condition on the nature of the interaction hypergraph 
there exists a polynomial time algorithm (based on linear algebra over ${\bf Z}_{p}$) for deciding reachability and recurrence of this 
dynamical system. Interestingly, we provide a counterexample that shows that this connection does {\em not} extend to all graphs.

\end{abstract}

\maketitle
\section{Introduction}	

Interacting particle systems \cite{liggett-ips} are discrete dynamical systems, naturally related to cellular
 automata \cite{hanson1997computational,gacs2001reliable}, that have seen extensive study in the Statistical Physics of Complex Systems. While they are 
most naturally studied on lattices, extensions to general graphs are possible. Such extensions have recently found many applications to social dynamics, 
particularly as  {\em opinion formation models} (see \cite{castellano2009statistical} for a recent survey). In particular, the most popular interacting
particle systems,  the voter and antivoter model and (by duality) annihilating and coalescing random walks have also been studied on a general graph 
\cite{donnelly-welsh-interacting, donnelly-welsh-coloring,aldous-fill-book}. 

Extensions to hypergraphs are also possible and relevant in a social context. For instance, Lanchier and Neufer \cite{lanchier2012stochastic} argue 
for the naturalness of such an extension and give a spatial version of Galam's majority model \cite{galam2002minority} via a majority voting rule. 
Motivated by behavioral voting experiments on networks \cite{kearns2009behavioral}, Chung and Tsiatas study \cite{chung2012hypergraph} a voter model on 
hypergraphs. A final example comes from the Statistical Physics of social balance \cite{redner-balance,antal2005dynamics}. A dynamical adjustment process 
introduced in these papers naturally leads via duality
\cite{istrate-balance} to an extension of annihilating random walks to hypergraphs. This extension can be specified as follows: 

\begin{definition}{\bf [ANNIHILATING RANDOM WALKS ON HYPERGRAPHS:]}\\

Particles live on the vertices of a hypergraph. At each moment: 
\begin{enumerate}
\item We chose a random vertex $v$ containing a particle.  
\item We chose a random hyperedge $e$ that contains vertex $v$.  
\item Vertices in $e$ that contain a particle (including $v$) become empty. On the other hand empty vertices in $e$ will afterwards contain a particle. 
\end{enumerate} 
\label{arwhyp}
\end{definition}

The process specified at Step 3 can be described intuitively in the following way: the particle $P$ at vertex $e$ spawns 
a number of descendents, one for each vertex $z\in e\setminus v$, then dies. The new particles may meet already a pre-existing particle at
 vertex $z$, in which case the two particles "collectively annihilate".  This dynamics, studied in \cite{istrate-balance}, is also naturally 
related as we found out after completing \cite{istrate-balance}, to a classical problem in the area of combinatorial games, the theory of 
{\em lights-out games} \cite{sutner-lightsout}. 
We further discuss this connection in the next section. 

The remarkable aspect of the extension~(\ref{arwhyp}) of annihilating random walks to hypergraphs lies in its "explosive" nature: on hyperedges one particle may give birth to more than one copy. Thus, unlike the graph case, the total number of particles is generally {\em not} a nonincreasing function. 

The purpose of this paper is to study a modulo-$p$ version of the dynamical system from \cite{istrate-balance}, specifically, the following system:

\begin{definition}\label{maindef} 
 Let $p\geq 2$ be an integer. 
 A {\em ${\bf Z}_{p}$-annihilating walk on a hypergraph $G$} is defined as follows: each node $v$ of $G$ is initially 
endowed with a number $w(v)\in {\bf Z}_{p}$ (interpreted as number of {\em particles}). 

The allowed moves are specified as follows: choose a node $v$ such that $w(v)\neq 0$ and a hyperedge $e$ containing $v$. Change the state of $w(v)$ 
to $w(v)-1$. Also change the state of every node $u\neq v$, $u\in e$ to $w(u)+1$. 
\end{definition}

In other words: a number of indistinguishable particles  are initially placed 
at the vertices of $G$, each vertex holding from 0 to $p-1$ particles. At each step we choose a vertex $v$ containing at least one particle and a hyperedge 
containing $v$. We delete one particle at $v$ and add one particle at every vertex $w\neq v\in e$. If the number of 
particles at some $w$ reaches $p$, these $p$ particles are removed from $w$ (they ''collectively annihilate``). 

We are mainly interested in the complexity of the following two problems: 

\begin{definition}[\bf REACHABILITY]

Given hypergraph $G=(E,V)$ and states $w_{1},w_{2}\in {\bf Z}_{p}^{V}$, decide whether $w_{2}$ is reachable from $w_{1}$.  
\end{definition}

\begin{definition}[\bf RECURRENCE]

Given hypergraph $G=(E,V)$ and states $w_{1},w_{2}\in {\bf Z}_{p}^{V}$, decide whether $w_{2}$ is reachable from any state 
$w_{3}\in {\bf Z}_{p}^{V}$ 
reachable from $w_{1}$.   
\end{definition}

Of course, reachability and recurrence are fundamental prerequisites for studying the {\em random} version of this dynamical system as a finite-state 
Markov chains, the problem that was the original motivation of our research. 

There are simple algorithms that put the complexity of these two problems above in the complexity classes PSPACE and EXPSPACE, respectively: 
for REACHABILITY we simply consider reachability in the (exponentially large) state space directed graph $S$ with 
vertex set ${\bf Z}_{p}^{V}$. For RECURRENCE we combine 
enumeration of all vertices $w_{3}$ reachable from $w_{1}$ (via breadth first search) with testing reachability of $w_{2}$ from 
$w_{3}$. 

The main purpose of this paper is to show that under a quite liberal sufficient condition on the nature of underlying hypergraph reachability and recurrence questions 
for ${\bf Z}_{p}$-annihilating walks can be decided in polynomial time (actually they belong to 
the apparently weaker class $Mod_{p}$-L \cite{bdhm-mod}, but we won't discuss this issue here any further), by solving a certain 
system of linear equations over $Z_{p}$. 

Of course, the above result is not entirely surprising, as it comes in an established line of applications of linear algebra to 
reachability problems in lights-out games (see \cite{scherphuis-lightsout} for a discussion and list of references). On the other hand, as discussed in the 
next section, the class of moves we allow is more restricted than that in the models in \cite{scherphuis-lightsout}, 
and it was only recently shown \cite{litonly-difference} that in certain cases this restriction does not matter (we refer to the next 
section for a full discussion). We provide a counterexample that, interestingly, shows that our result is not generally valid if we eliminate the sufficient condition. 

Throughout the paper we will assume that $p\geq 3$ is an odd number. 
\section{Related Work} 

As mentioned in the introduction, the dynamics studied in \cite{istrate-balance} 
is a generalization to hypergraphs of a variant of the {\em lights out ($\sigma$)-game} \cite{sutner-lightsout}, a problem 
that has seen significant investigation. The version we considered in \cite{istrate-balance} is the 
apparently more 
constrained {\em lit-only $\sigma^{+}$-game}: 

\begin{definition} 
 Let $G=(V,E)$ be a finite graph. Each vertex $v\in V$ has a lightbulb (that is either ''on`` or ''off``) and a light switch. In the 
{\em lights out ($\sigma$)-game} pressing the light switch at any given vertex $v$ changes the state of the lightbulbs at {\em all 
neighbors of $v$}. In the $\sigma^{+}$-game the action also changes the state of the lightbulb at $v$. The {\em lit-only} versions of the $\sigma$ and $\sigma^{+}$ games only allow toggling switches of lit vertices. 
\end{definition}

Sutner \cite{sutner-lightsout} showed that the all zeros state is reachable from the all-ones state in the 
$\sigma^{+}$-game. This was generalized to Scherphuis \cite{scherphuis-lightsout} to the lit-only $\sigma^{+}$-game. A 
 recent result (\cite{litonly-difference} Theorem 3) significantly overlaps with our result in 
\cite{istrate-balance}, essentially showing that the lit-restriction does not make a difference for reachability on 
 hypergraphs that arise as so-called {\em neighborhood hypergraphs} \cite{neighborhood-hypergraph} of a given graph: \
\begin{definition} 
 Given graph $G=(V,E)$, the {\em neighborhood hypergraph of $G$}, denoted $N(G)$, is the hypergraph whose vertices are those 
of $G$ and whose edges correspond to sets 
\[
 N^{+}(v)=\{v\}\cup \{w\neq v\in V: v\sim w\}
\]
\end{definition}

The result in \cite{litonly-difference} is incomparable to our result in \cite{istrate-balance}, as it does not require, as we do, that the degree of each hyperedge to be at least three; 
on the other hand we do not restrict ourselves to neighborhood hypergraphs.

A related operation on graphs called 
{\em Seidel switching} also yield dynamical systems  related to the one considered in this paper. Given a graph $G=(V,E)$, a {\em Seidel switching} at a node $v\in V$ yields a graph $H$ obtained by deleting from $G$ all edges $(v,w)\in E$ and adding to $H$ all edges $(v,w)\not \in H$. Mapping all edges to vertices of a 
Seidel switching  has recently been connected to lights-out games and investigated under random update \cite{random-seidel-switching}. Again, in contrast to such processes, the moves allowed in our systems correspond to "lit only" cases.

Lights out games were considered for finite fields ${\bf Z}_{p}$, $p\neq 2$ as well, e.g. in \cite{modular-domination}.
Our framework differs from the one in that paper in several important ways: first we consider the $\sigma^{+}$-game (rather than 
the $\sigma$-game). Second our definition differs slightly in the specification of the dynamics, as the value of the scheduled vertex 
{\em decreases}, rather than increases, by one (as it does in \cite{modular-domination}). The motivation for this variation is our desired connection 
with the theory of {\em interacting particle systems} \cite{liggett-ips}, particularly with the definition of coalescing/annihilating 
random walks. 

The further connections with this latter theory are also worth mentioning: threshold coalescing 
and annihilating random walks, where several particles have to be present at a site for interaction 
with the new particle to occur, have previously been studied (e.g. \cite{stephenson-threshold}) in the interacting particle 
systems literature. Compared to this work our results differ in an important respect: instead of working 
on a lattice like ${\bf Z}^{d}$ our result considers the case of a finite hypergraph. Remarkably few 
results in this area (e.g. \cite{donnelly-welsh-coloring}, \cite{donnelly-welsh-interacting}, see 
also \cite{aldous-fill-book} Chapter 14) consider the case of a finite graph topology, 
much less that of a finite hypergraph.

Finally, we briefly discuss the connections between the dynamical model studied in this paper and the Statistical Physics of social dynamics \cite{castellano2009statistical}.  
As stated, a model inspired by the sociological theory of {\em social balance} \cite{heider-book} that originated in the Statistical Physics literature \cite{redner-balance,antal2005dynamics}, was the original motivation for our work \cite{istrate-balance}. The model in this paper shares with the one in \cite{istrate-balance} a similar relationship to the one between the Potts and Ising models. In our case, however, we do not see how to sensibly extend the model in \cite{redner-balance,antal2005dynamics} so that it corresponds to our generalization of annihilating random 
walks. On the other hand such walks correspond via {\em duality} (see \cite{aldous-fill-book} Chapter 14 and \cite{griffeath-ips}) to a fundamental 
model of opinion dynamics, the {\em antivoter model}. ''Cyclic'' extensions of antivoter models have been investigated as well 
(e.g. \cite{griffeath1987survival,bramson1989flux}), and we can probably define such a ``cyclic'' extension that corresponds via duality to our ${\bf Z}_{p}$-generalization of 
annihilating random walks. This (and a more complete study of our system as a Markov chain) are left for subsequent work. 
                                                                                                           
\section{The Main Result}

Assume $|V(G)|=n$, $|E(G)|=m$,  and let $w_{1}$ and $w_{2}\in {\bf Z}_{p}^{n}$ be states of the system such that $w_{2}$ is reachable from $w_{1}$. 
Define variables $X_{e,v}$ denoting the number of times (modulo $p$) that vertex $v$ and hyperedge $e$ are chosen 
in the process from Definition~\ref{maindef}. The effect of scheduling pair $(v,e)$, given current configuration $w$, 
is to modify the value of $w[v[$ by $-1$ and of all $w[u]$, $u\in e\setminus \{v\}$, by +1 (mod p). Hence: 

\begin{equation}\label{diff} 
 \sum_{\stackrel{v\neq u,}{v,u\in e}}X_{e,u}- \sum_{v\in e}X_{e,v}=w_{2}[v]-w_{1}[v] \mbox{ }(mod\mbox{ } p) 
\end{equation} 

We will denote by $H(w_{1},w_{2},G)$ the system of equations~(\ref{diff}).

Does the converse hold? I.e. is the solvability of system $H(w_{1},w_{2},G)$ sufficient for the state $w_{2}$ to be reachable from 
$w_{1}$ ? The answer is easily seen to be negative: for any $p\geq 3$ state $(-1,-1,\ldots, -1)$ is a {\em Garden of Eden state} (that is, it has no preimage). 

What if add condition that $w_{2}\neq (-1,-1,\ldots, -1)$ ? The 
 answer to the previous question is still negative: Figure~1 provides a counterexample: from state $w_{1}=[0;\mbox{ } 1;\mbox{ } 1]$ one cannot reach state $w_{2}=[1;\mbox{ } 2;\mbox{ } 2]$, even 
though the system has a solution in ${\bf Z}_{3}$. Indeed, the only other configurations that can reach  $[1;\mbox{ } 2;\mbox{ } 2]$ are easily seen 
to be $[2;\mbox{ } 1;\mbox{ } 2]$ and $[2;\mbox{ } 2;\mbox{ } 1]$. 

\begin{figure}
\begin{center} 
\begin{minipage}{5cm}
\includegraphics[width=4.5cm,height=4.5cm]{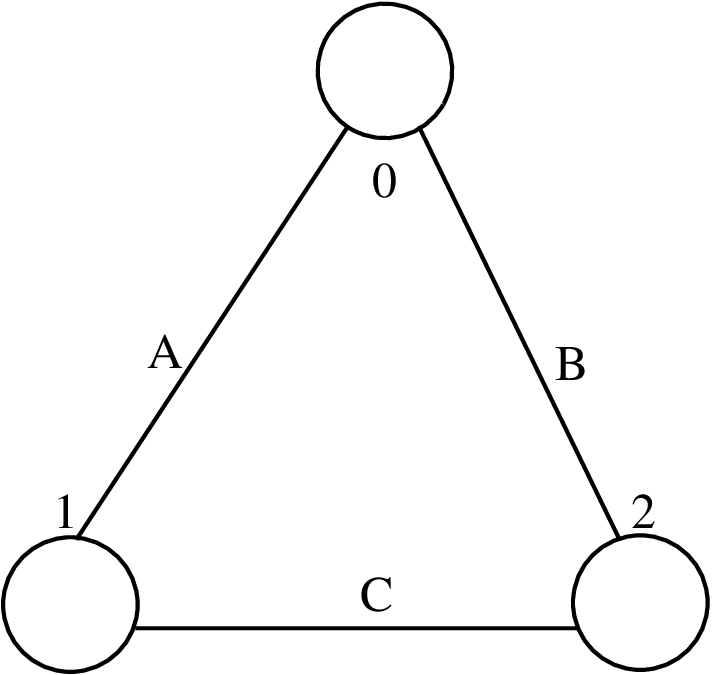} 
\end{minipage}
\begin{minipage}{5cm}
\[
[0;1;1]\rightarrow [1;2;2].\\
\]
\[
 \left\{\begin{array}{cc}              
              x_{1,A}-x_{0,A}+x_{2,B}-x_{0,B}=1, \\
              x_{0,A}-x_{1,A}+x_{2,C}-x_{1,C}=1, \\
            x_{1,C}-x_{2,C}+x_{0,B}-x_{2,B}=1, \\
              \end{array}
       \right.
\]
\[
\left\{\begin{array}{cc}
              x_{0,A}=0, 
           x_{0,B}=0, \\
 x_{1,A}=2, 
 x_{1,C}=0, \\
 x_{2,B}=2, 
 x_{2,C}=0. \\
              \end{array}
       \right.
\]
\end{minipage}
\end{center} 
\label{fig1}
\caption{(a). The counterexample $G$ (p=3) (b). The system and its solution}
\end{figure}

The counterexample in Figure~\ref{fig1} was a graph (had all hyperedges of cardinality 2). Restricting ourselves to hyperedges of size at least three  a converse does actually hold: 

\begin{definition}\label{goodh} 
 A connected hypergraph $G$ is {\em good} if for every hyperedge $e\in E(G)$, $|e|\geq 3$.
\end{definition}

Imposing the conditions in Definition~\ref{goodh} we obtain our main result. 

\begin{theorem}\label{trees}
 Let $G$ be a good hypergraph. Let $w_{1}$ be an initial configuration that is not identical to the "all zeros" configuration {\bf 0},
 and let $w_{2}$ be a final configuration, $w_{2}\neq [-1;-1;\ldots;-1]$. 

Then $w_{2}$ is reachable from state $w_{1}$ if and only if system $H(w_{1},w_{2},G)$ has a solution in ${\bf Z}_{p}$. 
\end{theorem}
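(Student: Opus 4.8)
The "only if" direction is immediate: equation~(\ref{diff}) is just bookkeeping — each scheduling of a pair $(v,e)$ changes the configuration in a fixed way, and summing these changes modulo $p$ over any legal run witnessing $w_1 \goesto w_2$ yields a solution of $H(w_1,w_2,G)$. So the whole content is the converse, and the plan is to turn a (possibly "illegal-looking") solution vector $(X_{e,v})$ into an actual legal schedule.

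The core idea is a \emph{reordering / amortization} argument. Given integer lifts $x_{e,v}\in\{0,1,\dots,p-1\}$ of a solution, one wants to realize the multiset of moves "$(v,e)$ performed $x_{e,v}$ times" in \emph{some} order so that at every step the vertex being decremented is nonzero. Two ingredients should make this possible under the "good" hypothesis. First, the condition $|e|\ge 3$ means that whenever we fire a hyperedge $e$ from $v$, at least two other vertices of $e$ gain a particle; this gives us slack to "prime" a neighbor before we need to fire from it. Second, the intersection condition $|e_1\cap e_2|\le 1$ keeps the hyperedges essentially independent except at single shared vertices, so that we can process one hyperedge's quota of moves as a self-contained block and only worry about the interface vertices when moving between hyperedges. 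Concretely, I would first handle a single "good" hyperedge $e$ in isolation: show that any target difference summing correctly (i.e. $\sum_{v\in e} \delta(v) \equiv 0$, with not-all-zero start on $e$) can be realized by legal moves confined to $e$ — essentially a $\mathbb{Z}_p$-generalization of Scherphuis's lit-only $\sigma^+$ analysis on the "star"/complete local structure, using $|e|\ge 3$ so there is always a lit vertex to push from and a place to push to. The key sub-lemma would be something like: starting from any nonzero configuration on $e$, every configuration on $e$ with the same "mass" in $\mathbb{Z}_p$ (or at least every one consistent with the linear constraint) is reachable by legal moves within $e$; plus, crucially, one can arrange the run so that $e$ ends in a state with a designated interface vertex nonzero, so control can be handed off.

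Then I would globalize: because $G$ is connected and hyperedges meet in at most one vertex, the "hyperedge incidence" structure is tree-like enough (or can be spanning-tree-ordered) that we process hyperedges one at a time. Starting from $w_1$, which is not $\mathbf{0}$, pick a hyperedge carrying a nonzero vertex, move along a spanning tree of the hyperedge-intersection structure, at each hyperedge $e$ executing its block of $x_{e,\cdot}$ moves (in the order dictated by the single-hyperedge lemma), using the incoming interface vertex — which we have ensured is nonzero — as the "seed" lit vertex, and leaving the outgoing interface vertex nonzero for the next hyperedge. Since the $X_{e,v}$ exactly solve~(\ref{diff}), after all blocks are executed every vertex has received the correct net change, so the final configuration is $w_2$. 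The only danger case is a hyperedge all of whose relevant vertices would be zero throughout; the $|e|\ge 3$ slack and the fact that $w_1\neq \mathbf 0$ (so nonzeroness propagates through the connected structure) are exactly what rule this out — and the triangle counterexample of Observation~\ref{counter} shows why $|e|\ge 3$ cannot be dropped.

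The main obstacle I expect is the single-hyperedge base case together with the bookkeeping of "handoff" states: one must prove not merely that the right net effect on $e$ is achievable by legal internal moves, but that it is achievable \emph{with a prescribed nonzero interface vertex at the end} (and, symmetrically, tolerating an arbitrary nonzero interface vertex at the start), and moreover that executing exactly the prescribed multiset $\{x_{e,v}\}$ (not some other multiset with the same net effect) can be scheduled legally. If the literal multiset causes a transient all-zero obstruction, one may need to add and later cancel a "null cycle" of moves on $e$ (a move combination with zero net effect in $\mathbb{Z}_p$, available precisely because $p\mid$ something only when the quota is a multiple of $p$, i.e. never, for lifts in $\{0,\dots,p-1\}$) — so a careful choice of lift, or an explicit legal-reordering lemma for a single "good" hyperedge, is where the real work lies.
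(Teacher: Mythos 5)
Your outline correctly isolates the two directions and several ingredients the paper also relies on: a single-hyperedge base case, propagation of nonzero values through the connected structure, and the role of $|e|\ge 3$ together with the pairwise-intersection bound in making local schedules composable. But two genuine gaps remain. First, the single-hyperedge lemma is where almost all of the work lives, and you leave it unproven. The paper's $m=1$ case is an induction on the \emph{norm} of the system (the integer sum of a lifted solution vector), and the engine that drives it is a pairing trick (Observation~\ref{trick}): if two vertices of $e$ do not both carry $0$ and do not both carry $p-1$, one can alternate moves at them $k$ times with $2k\equiv\pm 1\pmod p$, leaving their own labels unchanged while shifting every other label on $e$ by $\pm 1$. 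This is precisely the ``null cycle'' you gesture at, and it is what rescues the schedule when the vertex you need to fire is at $0$; without an explicit construction of this kind, the assertion that ``there is always a lit vertex to push from'' is not true --- the paper's residual cases (e.g.\ $w_1$ consisting only of $0$'s and $(p-1)$'s on $e$) show that a naive greedy order gets stuck.

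Second, your global strategy --- process hyperedges in spanning-tree order, executing each hyperedge's quota as a block and \emph{leaving the outgoing interface vertex nonzero} for the next hyperedge --- does not close. The interface vertices must end at their prescribed $w_2$ values, which may be $0$; a permanent handoff corrupts them. The paper instead propagates a nonzero value forward along a path and then \emph{restores} the path (Observation~\ref{second}), so the seeding is temporary and is undone once the downstream part has been handled; this in turn rests on a reversibility lemma for a single edge between nonzero states (Observation~\ref{first}). Moreover, the intersection graph of a good hypergraph need not be a tree, so spanning-tree ordering does not by itself make the blocks independent. The paper's actual induction on $m$ removes a hyperedge $l$ whose quota can be taken to be identically zero, classifies the connected components of $G\setminus\{l\}$ according to whether $w_1$ vanishes on them, and uses $l$ only for temporary seeding (scheduling a vertex of $l$ a total of $p$ times so the net effect cancels), with a separate norm induction for the case where no such $l$ exists. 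Your sketch would need both the restore mechanism and a correct induction scheme of this kind to become a proof.
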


\subsection{A Comment on the Significance of our main result}

The reader may wonder did we restrict ourselves in the statement of Theorem~\ref{trees} to good hypergraphs ? After 
all, we do {\bf not} expect that the notion of good hypergraphs captures all cases  for which a connection such as the one displayed in the theorem holds. 

The answer is that good hypergraphs form in some sense a natural {\em maximal class}: as shown by the example above, extending the result beyond good hypergraphs is impossible without further complications in the statement. 

\section{Proof of the Main Theorem} 

We will need the following definitions: 

\begin{definition}
 When system $H(w_{1},w_{2},G)$ is solvable we define {\em the norm of the system $H(w_{1},w_{2},G)$} as the quantity 
 \[
 |H(w_{1},w_{2},G)|=\min\{y_{1}+y_{2}+\ldots +y_{nm}\} 
 \] 
 where $y=(y_{1},\ldots, y_{nm})$ ranges over the (finite set) of all solutions in ${\bf Z}_{p}$ of the system, but when taking the sum above the $y_{i}$'s are interpreted as {\em integers} in $\{0,\ldots, p-1\}$, rather than in ${\bf Z}_{p}$. 
\end{definition} 

\begin{definition} 
Also, define the {\em width of the system $H(w_{1},w_{2},G)$} to be the minimum (over all solutions $x$ of the system) of 
\[
 |\{e\in E(G):\exists v\in e\mbox{ }|\mbox{ } x_{v,e}\neq 0\}|. 
\]
\end{definition} 

\begin{definition} 
Let $G=(V,E)$ be a hypergraph, $l\in E$ be a hyperedge in $G$, and $v\in l$ a vertex. We define state vector 
$a_{v,l}\in {\bf Z}_{p}^{n}$ by 
\[
a_{v,l}[z]=\left\{\begin{array}{ll}
              +1& \mbox{, if }z=v, \\
              -1& \mbox{, if }z\neq v, z\in l, \\
              0 & \mbox{, otherwise.}\\
              \end{array}
\right.
\]
\end{definition} 

\begin{definition}
Let $G=(V,E)$ be a hypergraph, $l\in E$ be a hyperedge in $G$, $w\in {\bf Z}_{p}^{n}$ be a state and $b\in {\bf Z}_{p}^{n}$. We denote by $w^{[b,l]}$ the following state: 
\[
w^{[b,l]}[v]=\left\{\begin{array}{ll}
              w[v]& \mbox{, if }v\not \in l, \\
              w[v]+b(v) & \mbox{, otherwise.}\\
              \end{array}
       \right.
\] 

Also, with the conventions in the previous definition, we will write $w^{[v,l]}$ instead of $w^{[a_{v,l},l]}$ and, for $k\geq 1$, 
$w^{[k,v,l]}$ instead of $w^{[k\cdot a_{v,l},l]}$. Vector $w^{[k,v,l]}$ can be interpreted as applying k moves at vertex v on edge l. 
\end{definition} 

\begin{definition}\label{good} 
 A pair of vertices $(v_{1},v_{2})$ is {\em good in state $w$} if $(w[v_{1}],w[v_{2}])\not\in \{(0,0),(-1,-1)\}$. 
\end{definition}

We first make the following simple 

\begin{observation}
\label{trick} 
Let $C$ be a configuration on hypergraph $G$ and $v_{1}\neq v_{2}$ 
two vertices of $G$ in the same hyperedge $e$ such that pair $(v_{1},v_{2})$ is good in $C$. Then one can
 change configuration $C$ into configuration $D$ that has the same number of particles at $v_{1},v_{2}$ but the number of particles at any other 
vertex $v$ of $e$ increases by one (mod $p$). The move only involves edge $e$ and some of its vertices.
A similar statement holds for decreasing labels by one (mod $p$), instead of increasing them.  
\end{observation} 

\begin{proof} 
If $label(v_{1})\neq 0$ and $label(v_{2})\neq p-1$ first make a move at vertex $v_{1}$ then make a move at vertex $v_{2}$. The number of particles at $v_{1},v_{2}$ stays the same, whereas it increases by two (mod p) at any other vertex. Since $p\geq 3$ is odd, $p$ is relatively prime to 2. We repeat this process $\lambda$ times, where $\lambda$ is chosen such that $2\lambda=1$ (mod p). 
If $label(v_{2})=-1$ then $label(v_{1})\neq -1$ (mod p), so we may repeat the above scheme with moves first made at $v_{2}$ then at $v_{1}$. 

The proof for the second case is identical, with $2\lambda = -1$ (mod $p$). 
\end{proof} 

\begin{proof} 
We prove Theorem~\ref{trees} by induction on $m$, 
the width of system $H(w_{1},w_{2},G)$. The proof, although simple, is somewhat cumbersome, comprising a large number of cases with several subcases of their own. For ease of comprehension, a visual outline of the proof and the various dependencies between the intermediate results is presented in Figure~2. 

\begin{figure}
\begin{center} 
\includegraphics[width=10cm,height=6cm]{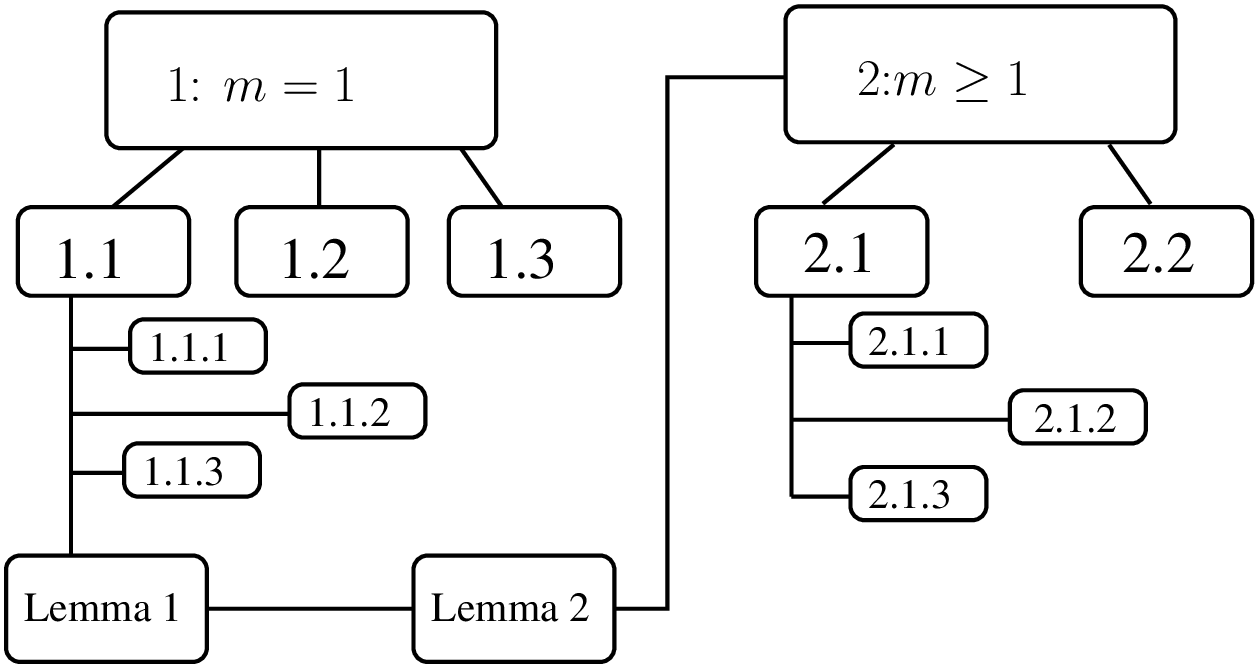} 
\end{center} 
\label{fig-outline}
\caption{Logical flow  of the proof.}
\end{figure}

\begin{itemize}
 \item{\bf Case 1: $m=1$. } Suppose system $H(w_{1},w_{2},G)$ has a solution of width one, thus involving a single edge $e$ of 
$G$. We infer that  
$w_{2}[v]=w_{1}[v]$ for all vertices $v\not \in e$ (otherwise the system would contain an equation $0=\lambda$, with 
$0\neq \lambda = w_{2}[v]-w_{1}[v]\in {\bf Z}_{p}\setminus \{0\}$). 

We will prove this case using two subcases, depending whether the restriction of state 
$w_{1}$ to hyperedge $e$, denoted by $w_{1}|_{e}$, is identically zero or not. 

\begin{itemize} 
 \item{\bf Subcase 1.1: $w_{1}|_{e}\neq {\bf 0}$ and $w_{2}|_{e}\neq [-1;-1;\ldots -1]$.}

We will give a solution involving only vertices of edge $e$. Since $w_{1}=w_{2}$ outside $e$ we can 
 assume that $G$ consists of exactly those vertices 
$v_{1},v_{2},\ldots, v_{k}$ connected by edge $e$. Denote $\overline{w}$ the vector $w_{2}-w_{1}$ and, for simplicity, 
let $\overline{w_{1}},\overline{w_{2}},\ldots, \overline{w_{k}}$ be shorthands for $\overline{w}[v_{1}],\overline{w}[v_{2}],\ldots, \overline{w}[v_{k}]$. 
Similarly, let $w_{a,i}$ stand for $w_{a}[v_{i}]$, where $i=1,\ldots,k$, $a=1,2$. 
Also define $w=\overline{w_{1}}+\overline{w_{2}}+\ldots + \overline{w_{k}}$. System $H(w_{1},w_{2},G)$ reads: 
\[
\left\{\begin{array}{cc}
           -x_{1}+x_{2}+\ldots + x_{k}= \overline{w_{1}}\\ 
x_{1}-x_{2}+\ldots + x_{k}= \overline{w_{2}}\\ 
\ldots \\
x_{1}+x_{2}+\ldots - x_{k}= \overline{w_{k}}\\ 
              \end{array}
       \right.
\]

The solvability (and solutions) of system $H(w_{1},w_{2},G)$ can easily be characterized in this case, and depends on whether $p$ divides or not $(k-2)$. In the latter case one can easily check that for any $w_{1},w_{2}$ system $H(w_{1},w_{2},G)$ has an unique solution 
$x_{i}=2^{-1}[(k-2)^{-1}w - \overline{w_{i}}]$, $i=1,\ldots, k$. In the former case, the system $H(w_{1},w_{2},G)$ has a solution if and only if $w=0$ (mod p). Indeed, the condition 
follows immediately from summing the equations of the system. On the other hand if $w=0$ holds one can easily verify that 
the following family 
\[
\left\{\begin{array}{ll}
           x_{1}= \lambda\\ 
           x_{2}= \lambda+2^{-1}(\overline{w_{1}}-\overline{w_{2}})\\ 
           x_{3}= \lambda+2^{-1}(\overline{w_{1}}-\overline{w_{3}})\\ 
\ldots \\
           x_{k}= \lambda+2^{-1}(\overline{w_{1}}-\overline{w_{k}})\\ 
              \end{array}
       \right.
\]
with $\lambda$ arbitrary in ${\bf Z}_{p}$, represents the family of solutions of system $H(w_{1},w_{2},G)$. 

In what follows we will not refer to this case dichotomy, but will simply prove the result by induction over $v=|H(w_{1},w_{2},G)|$. W
e will also denote $x=(x_{1},\ldots, x_{nm})$ a solution of $H(w_{1},w_{2},G)$ that witnesses the fact that the minimum in the definition of $|H(w_{1},w_{2},G)|$ is equal to $v$. 

\begin{itemize} 
\item {\bf Case 1.1.1: $v=1$. } 

Let $i_{0}$ be the unique index such that $x_{i}\neq 0$. Then $w_{2,i_{0}}=w_{1,i_{0}}-1$ and $w_{2,j}=w_{1,j}+1$ 
for $j\neq i_{0}$, the equalities being interpreted in ${\bf Z}_{p}$. In other words, we need to show how to change state vector 
$[w_{1,1};w_{1,2};\ldots ;w_{1,k}]$ into state vector 
$[(w_{1,1}+1);(w_{1,2}+1);\ldots (w_{1,i_{0}}-1);\ldots ;(w_{1,k}+1)]$.

If $w_{1,i_{0}}\neq 0$ a simple move at $v_{i_{0}}$ changes state $w_{1}$ into $w_{2}$ directly. So the only   
case that needs a proof is $w_{1,i_{0}}=0$. 

Let $j\neq i_{0}$ such that $w_{1,j}\neq 0$. Such an index exists since $w_{1}\neq {\bf 0}$. Furthermore, by reassigning indices we 
may assume without loss of generality that $i_{0}=1$ and $j=2$. Thus target state vector is $[(p-1);(w_{1,2}+1);\ldots ;(w_{1,k}+1)]$
\begin{enumerate} 
\item First, using $r$ times the trick from Observation~\ref{trick} at vertices $v_{1}$ and $v_{2}$  changes state $w_{1}=[0; w_{1,2};\ldots ;w_{1,k}]$ into state $[0; w_{1,2};w_{1,3}+r); \ldots ;(w_{1,k}+r)]$. We choose 
$r\in \{0,1\}$ (mod p) in such a way so that $w_{1,3}+r\neq 0,(p-1)$ (mod p). Next, apply (p-2) times the trick in Observation~\ref{trick}  between vertices $v_{2}$ and $v_{3}$ to turn the state vector into $[-2; w_{1,2};(w_{1,3}+r); \ldots ;(w_{1,k}+r-2)]$. 

Apply now a move at $v_{3}$ to turn the state vector into $[(p-1);(w_{1,2}+1);(w_{1,3}+r-1); \ldots ;(w_{1,k}+r-1)]$. 

\item If $w_{1,2}+1\neq (p-1)$ (mod p) then by applying $2-r$ times (mod p) the trick in Observation~\ref{trick}  to vertices $v_{1}$ and $v_{2}$ we reach the desired final state.  
\item Suppose we cannot reach alternative 2 for {\bf any} choice of $j$ with $w_{1,j}\neq 0$. Therefore, vector $w_{1}$ contains only zeros and $(p-2)'s$, with at least one $(p-2)$. Rearranging indices, we may assume that $w_{1}=[0;(p-2);0^{r-1};(p-2)^{k-r-1}]$, for some $1\leq r\leq k-1$, and that the target vector 
is $w_{2}=[(p-1);(p-1);1^{r-1};(p-1)^{k-r-1}]$. 

It is easy to see that alternative $r=1$ is impossible, given the hypotheses of Subcase 1.1: assuming otherwise, we would have $w_{2}|_{e}=[-1;-1;\ldots;-1]$, contradicting the 
second hypothesis.  

On the other hand for $r\geq 2$ moving from $w_{1}$ to $w_{2}$ is easy: first use Observation~\ref{trick} between the first two vertices to turn $w_{1}$ into vector $w_{3}=[0;(p-2);2;2^{r-2};0^{k-r-1}]$. Then hold vertices $v_{2}$ and $v_{3}$ and use Observation~\ref{trick} again to turn vector $w_{3}$ into $w_{4}=[-2;-2;2;0^{r-2};
(-2)^{k-r-1}]$. Finally, a single move at the third vertex yields 
final state $w_{2}$. 

Note that the assumptions $w_{2}\neq (-1)^{k}$ and $k\geq 3$ are the properties that allowed us to 
conclude that $r\geq 2$, ultimately enabling the construction above.
This is the step of the proof that critically employs these assumptions. 
\end{enumerate}

\item {\bf Case 1.1.2: $v\geq 2$. } If one of the following two conditions hold 
\begin{itemize} 
\item there exist two indices $i$ with $w_{1,i}\neq 0$, or 
\item only one such index exists, but a single move at $v_{i}$ moves the configuration to $w_{3}\neq {\bf 0}$
\end{itemize} 

then we first make one available move that brings the system to $w_{3}$. Now it is easily checked that system $H(w_{3},w_{2},G)$ is solvable and has norm $v$-1;  we apply the induction hypothesis. 

The only remaining case is $w_{1}=[1;-1;\ldots ;-1]$ and $w_{2}=[(1-v);(v-1);\ldots ;(v-1)]$. This is easily solved: First 
apply $2v$ times the trick in Observation~\ref{trick} to vertices $v_{1}$ and $v_{2}$ in order to change the state of the system to 
$[1;-1;(2v-1);\ldots ;(2v-1)]$. Then make a move $p-v\mbox{(mod p)}$ times at $v_{2}$. 

This concludes the proof of the case 1.1.2 and, with it, of Subcase 1.1.
\qed

\end{itemize} 

Before continuing with the remaining subcases of Case $1$, we give two applications 
of Subcase 1.1, namely  
Lemmas~\ref{first} and \ref{second} below, that will be useful in the sequel: 

\begin{lemma}\label{first} 
 Assume that $w_{1},w_{2}$ are states differing only on hyperedge $e$ 
whose restrictions to this edge are different from both $(0;0;\ldots;0)$ and $(-1;-1;\ldots;-1)$.  

Further assume that $w_{2}$ is reachable from $w_{1}$ via moves of 
edge $e$ only. Then $w_{1}$ is reachable in this way from $w_{2}$ as well. That is, we can ``undo'' a 
 sequence of moves on a given edge as long as the initial and the final states are both nonzero and different from $(-1;-1;\ldots;-1)$.  
\end{lemma}
\begin{proof} 
We can simply reason in the hypergraph $G_{2}$ containing edge $e$ only. Since $w_{2}$ is reachable from $w_{1}$, system $H(w_{1},w_{2},G_{2})$ has 
a solution $u$. It is easy to see that $-u$ is a solution to $H(w_{2},w_{1},G_{2})$ and we apply the result proved in Subcase 1.1.   
\end{proof} 

We next generalize the preceding lemma to the case when the hypergraph does not consist of a single edge anymore. To do so we need the following: 

\begin{definition} 
Given a hypergraph $H=(V_{H},E_{H})$, a {\em simple path} is a sequence of edges $Q=(q_{1},q_{2},\ldots q_{m})$ such that for all $1\leq i\neq j\leq m$ $q_{i}\cap q_{j}=\emptyset$ unless $j=i\pm 1$, in which case $q_{i}\cap q_{j}\neq \emptyset$.  
\end{definition} 

The desired generalization is: 

\begin{lemma}\label{second} 
 Let $P=(e_{1},e_{2},\ldots, e_{k})$, $k\geq 3$ be a simple path in hypergraph $G$. Let $s_{1}$ be a state such that there exists $s_{1}|_{e_{1}\setminus e_{2}}|\not \equiv {\bf 0}$. For $i=1,\ldots, k-1$ let $V_{i+1}=e_{i}\cap e_{i+1}$, let $v_{i+1}\in V_{i+1}$, 
and assume that $s_{1}[z]=0$ for all 
$z\in e_{2},e_{3},\ldots, e_{k-1}$.
Then there exists vertex $v_{1}\in e_{1}\setminus e_{2}$ with $s_{1}(v_{1})\neq 0$ such that configuration 
$s_{2}$, specified by 
\[
 s_{2}[v]=\left\{\begin{array}{cc}
               s_{1}[v]-1,&\mbox { if }v= v_{1}, \\
               s_{1}[v]+1,&\mbox { if }v\in e_{1},v\neq v_{1},v\not \in e_{2}\\
               0,&\mbox { if }v\in \{v_{2},\ldots, v_{k-1}\}\\
               2 ,&\mbox{ if v is a vertex in }V_{i}\setminus \{v_{i}\}, 2\leq i\leq k-1,\\ 
               1 ,&\mbox{ if v is another vertex in one of }e_{2},\ldots, e_{k-1}\\
                1,&\mbox { if }v\in V_{k}, \\
                s_{1}[v] ,&\mbox{ otherwise. }\\
              \end{array}
       \right.
\]
 is reachable from $s_{1}$ (and viceversa) by making moves only along path $P$. 
 \end{lemma} 
 
 We generalize the forward process in Lemma~\ref{second} to a forward-backward process as follows: 
 
 \begin{lemma}\label{third} 
 It is possible to perform a set of forward moves on path $P$, similar to that described in Lemma~\ref{second}, such that if we subsequently we perform the following transformation: 
\begin{enumerate} 
\item We change the values of nodes $v\in e_{k}\setminus e_{k-1}$ to $\mu[v]$, and of those in $e_{k}\cap e_{k-1}$ to $1+\mu[v]$,  where $\mu[v]\in {\bf Z}_{p}$ (denote by $s_{3}$ the resulting state). We assume that these changes are performed without modifying the values of any node in $P\setminus e_{k}$.  
\end{enumerate} 
 then we can perform restoring moves on edges in $P\setminus e_{k}$ to bring back all values of this path $P$ to their values in $w_{1}$, except for nodes $v\in V_{k}$ for which the final value will be 
$\mu[v]$. 
\end{lemma}
 
Lemma~\ref{third} informally states that one can ``propagate a one`` along the path from $v_{1}$ to $v_{k}$ as long as vertices between the two are initially zero, 
and then restore the configuration 
(see Figures~\ref{forward} and~\ref{backward}, in which $\lambda =0$ and all sets $V_{i}$ have cardinality 1). 

\begin{proof} 
The forward moves are easy: just choose $v_{1}$ arbitrarily in $e_{1}\setminus e_{2}$ with $s_{1}[v_{1}]\neq 0$. Then schedule, in turn, vertices $v_{1},v_{2},\ldots,$ $\ldots, v_{k-1}$, on edges $e_{1},\ldots, e_{k-1}$ respectively in this order. We use the 
fact that labels of $v_{2}, \ldots, v_{k-1}$ are initially zero, hence scheduling them in turn increases the label of the next node ($v_{k}$, in case of 
the last one) by one. The new nodes (except maybe the last) get values equal to one, so they can be scheduled in turn. Vertices that are ''internal`` 
to one of the edges $e_{2},\ldots, e_{k-1}$ get value 1; vertices in $V_{i}\setminus v_{i}$ (if any) get value 2. 

Suppose now that values of vertices in $V_{k}$ have been altered in the way specified by Step 2 of the multi-level process, resulting in state $s_{3}$. 

The analysis of the backward schedule is only a little more complicated, and comprises three cases: 
\begin{itemize} 
\item{\bf Case I: $s_{3}|_{e_{k-1}}\neq {\bf 0}$.}  

In this case we can also choose $v_{1}$ arbitrarily with the above constraints. 
First  we ''undo`` in succession the forward moves on sets $e_{k-1}\setminus V_{k},e_{k-2},\ldots, e_{2}\setminus V_{2}$, turning nodes on in these sets to zero, and nodes in $V_{2}$ to 1. To do so we use Case 1 of the Theorem and the fact that each $e_{i}$, $2\leq i\leq k-2$ contains at least one ''internal`` node (whose label is 1), or a node in $V_{i}$, whose label is nonzero. The proof of this last claim crucially uses the  conditions in the definition of good hypergraphs. 
For edge $e_{k-1}$ the argument uses the fact that $s_{3}(v_{k-1})=0$ and $s_{3}|_{V_{k}}\neq 0$. 

We are left with vertices of $V_{2}$ with a label of 1. We can use it to restore the correct values on edge $e_{1}$ as well.
\item{\bf Case II: $k\geq 4$, $s_{3}|_{e_{k-1}}\equiv {\bf 0}$.}  The argument is almost similar. First,  $e_{k-1}$  is already in the state we want to obtain, since $s_{1}|_{e_{k-1}}=s_{3}|_{e_{k-1}}={\bf 0}$.

Furthemore, as $s_{3}|_{e_{k-1}}\equiv {\bf 0}$ we infer the following things: 
\begin{enumerate} 
\item $V(e_{k-1})\subseteq V_{k-1}\cup V_{k}$ (otherwise any node in $e_{k-1}\setminus (V_{k-1}\cup V_{k})$ would have value 1 in $s_{3}$).
\item $|V_{k-1}|=1$ (otherwise any vertex in $V_{k-1}\setminus \{v_{k-1}\}$ would have label 2 in $s_{3}$).
\end{enumerate} 

Because of this second condition and $k\geq 4$, edge $e_{k-2}$ must have a node with nonzero value in $s_{3}$: as $|e_{k-2}|\geq 3$, either  $V_{k-2}$ has cardinality greater than one (and thus contains a vertex whose label is 2) or there exists a node 
``internal to $e_{k-2}$'', that is in $e_{k-2}\setminus (V_{k-2}\cup V_{k-1})$, whose label i $s_{3}$ is 1.  

In this case we can start the changing back process from $e_{k-2}$. 

\item{\bf Case III: $k=3$, $s_{3}|_{e_{2}}\equiv {\bf 0}$.}

The strategy will be to carefully choose $v_{1}$ in $e_{1}\setminus e_{2}$ and perform a {\em modified forward schedule} that will schedule edge $e_{1}$ twice, nd make vertex $v_{2}$ subsequently assume value $2$ instead of 1. Then when propagating on $e_{2}$ the label of $v_{2}$ will become 1 instead of zero. Hence propagation on edge $e_{2}$ can be undone in the backward phase.   

As in Case II by reasoning about the regular forward process we infer the following things: 
\begin{enumerate} 
\item $V(e_{2})\subseteq V_{2}\cup V_{3}$ (otherwise any node in $e_{2}\setminus (V_{2}\cup V_{3})$ would have value 1 in $s_{3}$).
\item $|V_{2}|=1$, i.e. $V_{2}=\{v_{2}\}$, otherwise any vertex in $V_{2}\setminus \{v_{2}\}$ would have label 2 in $s_{3}$. 
\end{enumerate} 

However, in this case, as $|V(e_{1})|\geq 3$ we also infer the following extra fact: 
\begin{equation} 
|e_{1}\setminus e_{2}|> 1.
\label{fact} 
\end{equation} 

The modified forward process is specified as follows: 
\begin{itemize}
\item[-]
If there exists a vertex $v_{1}\in e_{1}\setminus e_{2}$ with $s_{1}[v_{1}]\neq 0,1$ we choose such a vertex and make two moves from $v_{1}$ (instead of one) on the forward schedule. This will ensure the desired label for $v_{2}$ in the modified forward process. 
\item[-]
If there exists no such $v_{1}$ it means that $s_{1}$ only assumes values $0,1$ on $e_{1}\setminus e_{2}$. Then the modified process proceeds by first choosing $v_{1}$ with $s_{1}[v_{1}]=1$ and make a move on $e_{1}$ at $v_{1} $. Because of condition~(\ref{fact}) there now exists a vertex $v_{1}^{\prime}\in e_{1}\setminus e_{2}$ whose label is $1$ or $2$ (hence nonzero). We make the move on $e_{1}$ at $v_{1}^{\prime}$, bringing the label of $v_{2}$ to 2 as needed, and then continue with the forward process on $e_{2}$. 
\end{itemize}

\end{itemize} 
\end{proof} 

\begin{observation} 
Lemma~\ref{third} assumed that path $P$ has length at least three. In fact we can extend the Lemma (in a slightly modified form) so that it applies to paths $P$ has length two if we are allowed to carefully choose vertex $v_{1}$. 

Specifically we need to chose it so that scheduling it will produce at least one nonzero vertex in $e_{1}\setminus e_{2}$. The only problematic case is when there exists an unique vertex in $e_{1}\setminus e_{2}$ having label 1 in $s_{1}$ and all other vertices have label 0. In this case $|V_{2}|\geq 2$. Let $u_{2}\neq v_{2}$ be such a vertex. Performing the trick of Observation~\ref{trick} on vertices $v_{1}$ and $u_{2}$ is enough to turn the label of all vertices of $V_{2}$, other than $u_{2}$ to 1, while preserving the label of $v_{1}$. This makes backward restoration unnecessary, as the label of $v_{1}$ was not affected. 
\end{observation}

We now return to the proof of the Case $1$ of Theorem~\ref{trees}, specifically to the remaining subcase:  

\item{\bf Subcase 1.2: $w_{1}|_{e}\equiv {\bf 0}$ but $w_{2}|_{e}\neq [-1;-1;\ldots -1]$.}

In this case we apply Lemma~\ref{third} to reduce the problem to Subcase 1.1 as follows: let $v$ be a vertex with $w_{1}(v)\neq 0$ 
at minimal distance from $e$. Let $P$ be a path of minimal length connecting $v$ to a set of vertices $U$ of $e$. Path $P$ is simple by minimality. 

We use the forward trick in Lemma~\ref{second}
to propagate a 1 value to vertices of $U$, thus making the resulting state nonzero on edge $e$. Then we use the case $w_{1}|_{e}\not \equiv {\bf 0}$ and the solvability of the resulting associated system 
to change the state of the system to $w_{3}$, where $w_{3}$ has the value prescribed above on $P\setminus e$ and $w_{3}=w_{2}$ on $e$ except at vertices $v\in U$, for which $w_{3}[v]=w_{2}[v]+1$.  

If $w_{3}|_{U}\neq {\bf 0}$ or $w_{3}|_{U}\equiv {\bf 0}$ but the last edge $f$ of $P$, the one that intersects $e$ on $U$, contains a vertex with label 1 or 2 then $w_{3}|_{f}\neq {\bf 0}$. Thus we may perform the multi-stage trick of Lemma~\ref{third}, where the vertices $v\in U$ have been modified by the amount $(w_{2}[v]+1)-1=w_{2}[v]$ to restore the state $w_{1}$ along $P$ except on $U$, where it will be $w_{1}+w_{2}|_{U}$, that is $w_{2}|_{U}$, since $w_{1}|_{U}\equiv {\bf 0}$. 
   
\item{\bf Subcase 1.3: $w_{1}|_{e}\equiv [0;0;\ldots 0]$ and $w_{2}|_{e}\equiv [-1;-1;\ldots -1]$.}

Let $v$ be a vertex (necessarily not in $e$) with $w_{1}(v)=w_{2}(v)\neq 0$ and let $P$ be a minimal path connecting $v$ to $e$. We use forward propagation to change the state of vertices in $e\cap P\neq \emptyset$ to 1. Then we use Subcase 1.1 to change the labels to state $w_{3}$, whose restriction to edge $e$ is as follows: $-1$ on $e\setminus P$, and $0$ on $e\cap P$. Finally, we change the state to $-1$ on $e\setminus P$ and restore the state on path $P\setminus e$ by using backward propagation. 

\end{itemize}

\begin{figure}[ht]
\hfill
\begin{center}
\includegraphics[width=10cm]{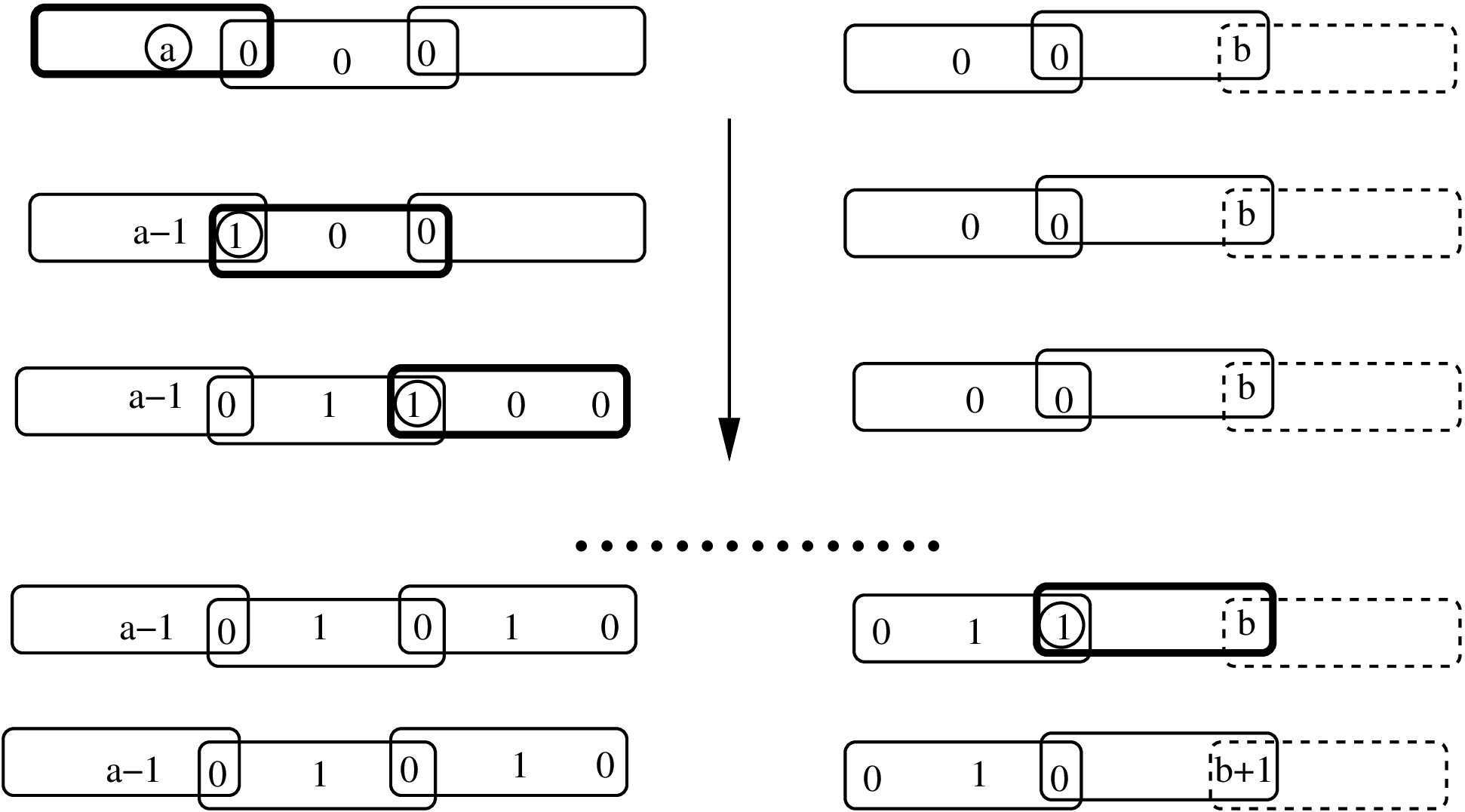}
\end{center}
\caption{Forward propagation of nonzero values. Initially node $v_{1}$ has value $a$ and 
target node $v_{k}$ has value $b$ (with edge $e_{k}$ being dashed). On each row the node scheduled at that stage is circled, with the scheduled edge being darkened. Scheduling a node has the effect of decreasing its label by one (mod p) and increasing the label of all other nodes in the edge by one (mod p). For simplicity we pictured the situation when no two hyperedges intersect at a set of cardinality larger than one. Thus no label 2 is created and all hyperedges have ``internal'' nodes (whose new state is 1). Last row represents the resulting state. 
}
\label{forward}
\end{figure}

\begin{figure}[ht]
\hfill
\label{backward}
\begin{center}
\includegraphics[width=10cm]{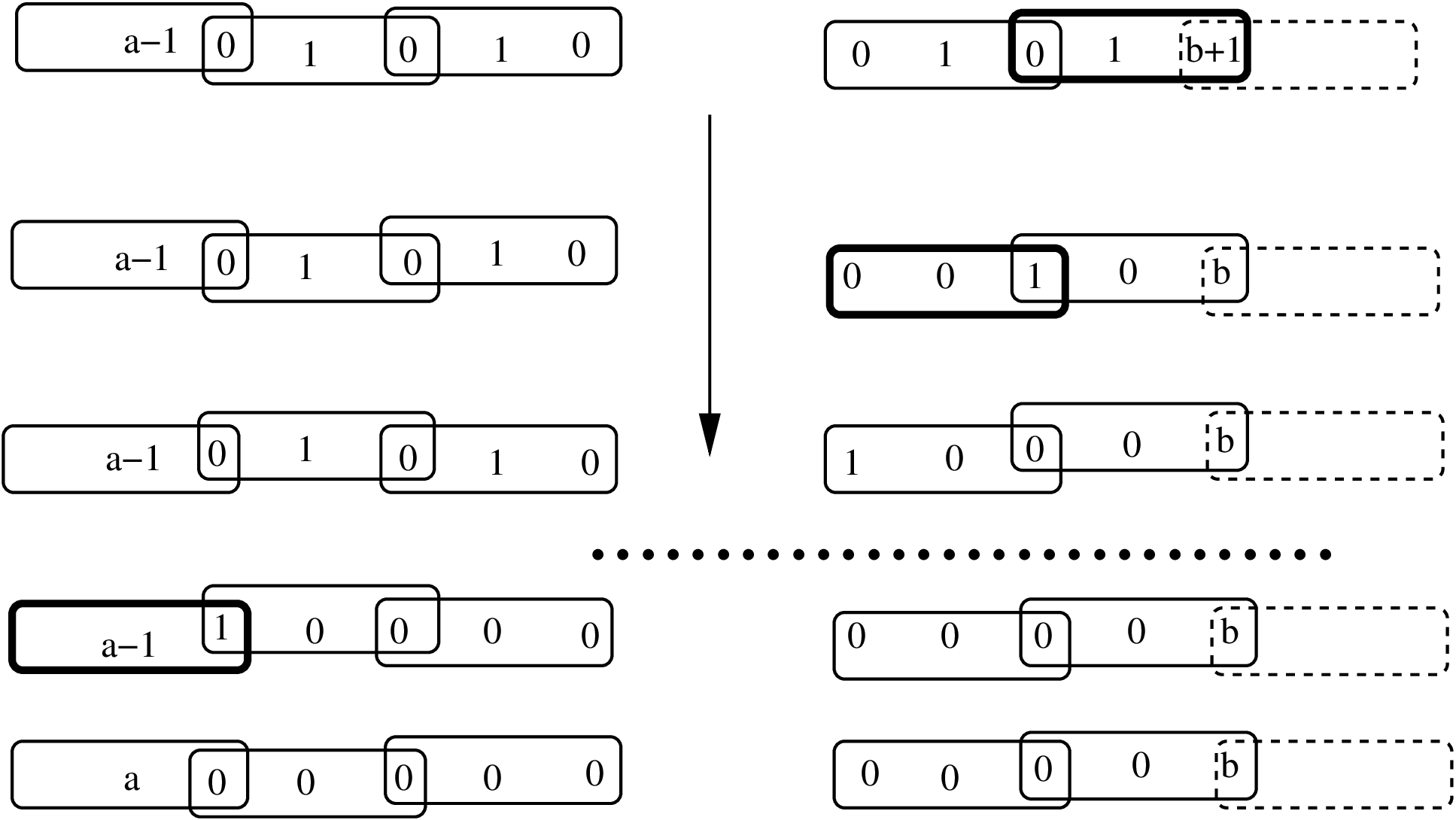}
\end{center}
\caption{Backward restoration. Each pictured ''step`` applies Observation~\ref{first} on one edge, starting from $e_{k-1}$ down to $e_{1}$. On each row the the scheduled edge is darkened.  For simplicity we pictured the particular situation when no two hyperedges intersect at a set of cardinality larger than one, hence all hyperedges have ``internal'' nodes (whose state is 1 after the forward moves). The scheduled nodes are these internal nodes. Last row represents the resulting state.}
\end{figure}

\item {\bf Case 2: $m\geq 2$ } 

Let $x$ be a solution of $H(w_{1},w_{2},G)$ of minimal width $m$, and $e_{1},e_{2},\ldots, e_{m}$ the edges of $x$ for 
which there exists a vertex $v\in e$ with $x_{v,e}\neq 0$. 

Let $w_{3,i}$ be the state of the system specified in the following way: $w_{3,i}(v)=w_{1}(v)+U_{i,v}$, with $U_{i}=(U_{i,v})_{v\in V}$ specified as follows: 

\begin{itemize}
\item $U_{i,v}=0$ for $v\not \in e_{i}$. 
\item For vertices $v$ of $e_{i}$ , $U_{i,v}=
-x_{v,e_{i}}+\sum_{w\neq v,w\in e_{i}} x_{w,e_{i}}$. 
\end{itemize} 

Intuitively $w_{3,i}$ is the state that would be reached from $w_{1}$ when ''making the moves specified by solution $x$ on edge $e_{i}$ only`` (if possible). 

There are two alternatives: 

\begin{enumerate} 
 \item{\bf Case 2.1: } There exists $1\leq i \leq m$ such that 
$w_{3,i}\neq {\bf 0}$ and $w_{3,i}\neq (-1;-1;\ldots;-1)$. 
\\
In this case system $H(w_{1},w_{3,i},G)$ has a solution $\overline{x}$ of width one, in fact zero outside edge $e_{i}$: it 
is simply $x$ with values outside of $e_{i}$ replaced by zeros. 

Applying Case $m=1$ of the theorem we infer that $w_{3,i}$ is reachable from $w_{1}$. Then it is easy to see that 
System $H(w_{3,i},w_{2},G)$ has width at most $m-1$, and we apply the induction hypothesis. 

\item{\bf Case 2.2: } $w_{3,i} \equiv {\bf 0}$ or $w_{3,i} \equiv (-1;-1;\ldots;-1)$ for all $1\leq i\leq m$. 
There are three alternatives:
\begin{enumerate}
\item{\bf Case 2.2.1: } $w_{3,j}\equiv 0$ for all $j$. 
\item{\bf Case 2.2.2: }  $w_{3,j}\equiv (-1;-1;\ldots;-1)$ for all $j$. 
\item{\bf Case 2.2.3: } there exist $k\neq l$ such that $w_{3,k}\equiv {\bf 0}$ and $w_{3,l}\equiv (-1;-1;\ldots;-1)$.  
\end{enumerate} 

In case 2.2.3, the two relations above imply the fact that edges $e_k$ and $e_l$ cover all vertices in $G$ (otherwise there would be 
a vertex $v$ outside $e_{k}\cup e_{l}$, which is constrained to mutually incompatible values by the two relations).

 Since $G$ is connected, we may chose edges $e_{k},e_{l}$ to intersect (otherwise we would have two connected components induced by vertices in edges that intersect $e_{k}$, $e_{l}$, respectively). 


Finally, state 
$w_{1}$ is determined, except on $e_k\cap e_l$: $w_{1}(v)=0$ if $v\in e_l\setminus e_k$, $w_{1}(v)=-1$ if $v\in e_k\setminus e_l$. 

In cases 2.2.1 and 2.2.2 the value of $w_{1}$ is determined on all vertices $v$ with the possible exception of vertices (if any) that are parts of all edges: $w_{1}(v)=0$ (Case 2.2.1), respectively $w_{1}(v)=-1$ (Case 2.2.2). 
\vspace{5mm} 

{\bf Case 2.2.1.} This conclusion implies the fact that 
$w_{1}\equiv {\bf 0}$, a contradiction, except in one case: that when all edges $e_{1},e_{2},\ldots, e_{m}$ intersect at vertices in some set $S$. 

We have to show that $m\geq 2$ is not possible even in this remaining case. Indeed, we further infer the fact that 
$w_{1}=w_{2}\equiv 0$ everywhere except $S$. On the other hand 
for every $v\in S$, any edge from $e_{1},\ldots, e_{m}$ turns the state of $v$ from $w_{1}(v)$ to zero. Their combined effect (represented by the system $H(w_{1},w_{2},G)$) is therefore such that 
then $w_{2}[v]= w_{1}[v] (1-m)$. 

In this case we show that system $H(w_{1},w_{2},G)$ has width one. We will be using a single edge, say $e_{1}$. Indeed, since $w_{3,1}\equiv 0$ 
it follows that  $y_{v,e_{1}}=-(p-m(mod\mbox{ }p))x_{v,e_{1}}$ for all 
$v\in e_{1}$, $y_{w,e}=0$, otherwise, is a solution of the system $H(w_{1},w_{2},G)$. 

\vspace{5mm} 

{\bf Case 2.2.2.} Similarly to case 2.2.1, this conclusion implies the fact that 
$w_{1}\equiv (-1;-1;\ldots;-1)$,  except in the case when all edges $e_{1},e_{2},\ldots, e_{m}$ intersect at vertices in some set $S$, in which case $w_{1}[v]=w_{2}[v]=-1$ for every $v\not \in S$.  

In this case we show that the case $m\geq 2$ is not possible either. The argument is similar to that of Case 2.2.1. The effect of every edge $e_{1},\ldots, e_{m}$ is determined by the condition $w_{3,i}\equiv (-1;-1;\ldots;-1)$: it leaves unchanged values on vertices outside $S$; for nodes $v\in S$ it changes value $w_{1}(v)$ to $-1$. The combined effect of all such edges (on vertices in $S$) is, therefore, to leave $w_{1}[v]$ unchanged outside $S$. On the other hand, on vertices $v\in S$ it change the value $w_{1}(v)$ to $w_{2}[v]=w_{1}[v]-m(w_{1}[v]+1).$ 

In this case again system $H(w_{1},w_{2},G)$ has width one: $y_{v,e_{1}}=-(p-m(mod\mbox{ }p))x_{v,e_{1}}$ for all 
$v\in e_{1}$, $y_{w,e}=0$, otherwise, is a solution. 

\vspace{5mm} 

{\bf Case 2.2.3.} Let $a,b\geq 1$ be the number of edges $e_{i}$ such that $w_{3,i}\equiv {\bf 0}$, $w_{3,i}\equiv (-1;-1;\ldots;-1)$ respectively. 

We claim that the system $H(w_{1},w_{2},G)$ has a solution of width at most two, equal to 
$a\cdot U_{k}+ b\cdot U_{l}$. This follows easily from using an idea similar to that of Cases 2.2.1 and 2.2.2: $a$ of the edges have a similar effect as making the move according to vector $U_{k}$. Their combined effect is therefore identical to that of $a\cdot U_{k}$. We reason similarly for the b edges whose effect is equal to changing state by vector $U_{l}$. The combined effect of all edges $e_{1},\ldots, e_{m}$ is thus equal to $a\cdot U_{k}+ b\cdot U_{l}$, which means that this value is a solution to system $H(w_{1},w_{2},G)$. 

These considerations also uniquely determine state $w_{2}$, given $w_{1}$: 
\[
 w_{2}[v]=\left\{\begin{array}{cc}              
              a-1, &\mbox{ if }v\in e_{k}\setminus e_{l}, \\
              -b,&\mbox{ if }v\in e_{l}\setminus e_{k}, \\
            w_{1}[v]-aw_{1}[v]-b(p-1-w_{1}[v]),&\mbox{ if }v\in e_{k}\cap e_{l}. \\
              \end{array}
       \right.
\]
This relation simply rewrites equality $w_{2}=w_{1}+a\cdot U_{k}+b\cdot U_{l}$. 

To conclude, we have to show that state $w_{2}$ is reachable from $w_{1}$ in graph $G$ restricted to edges $\{k,l\}$, where 
$w_{1}\neq {\bf 0},w_{2}\neq (-1;-1;\ldots;-1)$,

\[
 w_{1}[v]=\left\{\begin{array}{cc}              
             -1, &\mbox{ if }v\in e_{k}\setminus e_{l}, \\
              0,&\mbox{ if }v\in e_{l}\setminus e_{k}, \\
            \mbox{arbitrary},&\mbox{ if }v\in e_{k}\cap e_{l}. \\
              \end{array}
       \right.
\]

\[
 w_{2}[v]=\left\{\begin{array}{cc}              
              a-1, &\mbox{ if }v\in e_{k}\setminus e_{l}, \\
              -b,&\mbox{ if }v\in e_{l}\setminus e_{k}, \\
            w_{1}[v](b+1-a)+b,&\mbox{ if }v\in e_{k}\cap e_{l}. \\
              \end{array}
       \right.
\]

and $w_{1}+U_{k}\equiv {\bf 0}$, $w_{1}+U_{l}\equiv (-1;-1;\ldots;-1)$. 
\end{enumerate}

We will chose $z\in e_{k}\cap e_{l}$ and define state $W$ as follows: $W$ is specified by sum $w_{1}+a\cdot Z_{k}$, where $Z_{k}$ is a vector that coincides with $a\cdot U_{k}$ except at vertex $z$, where it is equal to $a\cdot U_{k}(z)-\lambda \mbox{(mod p)}$, with $\lambda\in {\bf Z}_{p}$ to be chosen later. 

We have chosen state $W$ in this particular way to allow us to apply the induction hypothesis $m=1$ and conclude that $W$ is reachable from $W_{1}$ (by making moves on $e_{k}$ only). We also want to argue using Case 1 that state $\Lambda$ is reachable from $W$ (by making moves on $e_{l}$ only), where $\Lambda$ is defined by vector $W+b\cdot U_{l}$. Finally, we want to use again the induction hypothesis with $m=1$ to argue that $w_{2}$ is reachable from $\Lambda$ (by making moves on $e_{k}$ only). 

To be able to accomplish all of these we need to satisfy (by the statement of Case 1) the following conditions: 
\begin{itemize} 
\item[(a)] $W\neq {\bf 0}$. 
\item[(b)] $\Lambda \neq (-1;-1;\ldots;-1)$. 
\item[(c)] $\Lambda \neq {\bf 0}$. 
\end{itemize} 

One can satisfy each condition by eliminating from consideration one possible value of $\lambda$ for each condition (a),(b),(c), and setting $\lambda$ to a remaining value 
that enforces (a),(b),(c) on vertex $z$. This already proves our claim in all cases but the one when $p=3$. In fact we can extend this argument to the case $p=3$ as well: condition  $W\equiv {\bf 0}$ uniquely identifies one value $\lambda_{0}\in {\bf Z}_{p}$. Then both choices $\lambda_{0}-1,\lambda_{0}+1$ lead to a state $W$ that satisfies (a). At least one of these two choices satisfies (b) and (c) as well. Indeed, the two resulting states $W_{\lambda=\lambda_{0}-1}$ and $W_{\lambda=\lambda_{0}+1}$ differ by $2\mbox{ (mod p)}$ at vertex $z$ and 
$-2 \mbox{ (mod p)}$ at other vertices $v$ of $e_{k}$. It is not 
possible then that $W_{\lambda=\lambda_{0}-1},W_{\lambda=\lambda_{0}+1} \in \{ {\bf 0}, (-1;-1;\ldots;-1)\}$. Hence at least one of the two choices satisfies all of (a), (b) and (c).   

\end{itemize} 
\end{proof}

\section{From Reachability to Recurrence} 

We have seen that reachability is easy to test. In the next result we show that recurrence essentially reduces to two 
 reachability tests: 

\begin{theorem}\label{recurrence} 
In conditions of Theorem~\ref{trees}, 
 given hypergraph $G=(E,V)$ and states $w_{1},w_{2}\in {\bf Z}_{p}^{n}$, $w_{1}\neq {\bf 0}$, state $w_{2}$ is a recurrent state for the dynamics started at 
$w_{1}$ if and only if: 
\begin{itemize} 
 \item[(1)] $w_{2}$ is reachable from $w_{1}$. 
\item[(2)] State ${\bf 0}$ is {\em not} reachable from $w_{1}$. 
\end{itemize}
\end{theorem}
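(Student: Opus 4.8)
The plan is to obtain the result essentially for free from Theorem~\ref{trees}, using only two elementary observations. The first is that the all-zero state ${\bf 0}$ is \emph{absorbing}: Definition~\ref{maindef} only allows scheduling a vertex with a nonzero label, so no move is legal at ${\bf 0}$ and the unique state reachable from ${\bf 0}$ is ${\bf 0}$ itself. The second is that solvability of $H(\cdot,\cdot,G)$ is translation-invariant: for a fixed $G$ the system $H(w_1,w_2,G)$ is a linear system $Mx = w_1 - w_2$ over ${\bf Z}_{p}$ whose coefficient matrix $M$ depends only on $G$; consequently, if $H(w_1,w_3,G)$ and $H(w_1,w_2,G)$ are both solvable, then subtracting two witnessing solutions produces a solution of $H(w_3,w_2,G)$, which is therefore solvable as well.

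For the ``if'' direction, assume (1) and (2) and let $w_3$ be an arbitrary state reachable from $w_1$. By (2) we have $w_3 \neq {\bf 0}$. Since $w_1 \neq {\bf 0}$, Theorem~\ref{trees} converts the reachability of $w_3$ from $w_1$ and of $w_2$ from $w_1$ into the solvability of $H(w_1,w_3,G)$ and of $H(w_1,w_2,G)$; by the translation-invariance remark $H(w_3,w_2,G)$ is then solvable, and since $w_3 \neq {\bf 0}$ a second application of Theorem~\ref{trees}, now with initial state $w_3$, shows that $w_2$ is reachable from $w_3$. As $w_3$ ranged over all states reachable from $w_1$, this says precisely that $w_2$ is a recurrent state for the dynamics started at $w_1$.

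For the ``only if'' direction, suppose $w_2$ is a recurrent state for the dynamics started at $w_1$. Taking $w_3 = w_1$, which is reachable from $w_1$ in zero moves, the defining property of recurrence yields (1) immediately. For (2), suppose towards a contradiction that ${\bf 0}$ is reachable from $w_1$; then by recurrence $w_2$ is reachable from ${\bf 0}$, and since ${\bf 0}$ is absorbing this forces $w_2 = {\bf 0}$. But the dynamics can never leave ${\bf 0}$, so ${\bf 0}$ cannot be revisited after a nontrivial excursion and hence is not a recurrent state of the dynamics started at $w_1$ --- a contradiction. Therefore ${\bf 0}$ is not reachable from $w_1$, which is (2).

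I expect the substantive content to be thin, since Theorem~\ref{trees} already does all the heavy lifting; the only delicate point, and the one I would be most careful about, is the bookkeeping around ${\bf 0}$. Concretely: in the ``if'' direction one must notice that hypothesis (2) is exactly what keeps every state reachable from $w_1$ away from ${\bf 0}$, which is what licenses invoking Theorem~\ref{trees} ``in reverse'' at each such $w_3$; and in the ``only if'' direction one must correctly rule out the degenerate possibility $w_2 = {\bf 0}$.
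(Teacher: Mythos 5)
Your sufficiency argument is exactly the paper's: given (1) and (2), an arbitrary reachable $w_3$ is nonzero by (2), a solution of $H(w_3,w_2,G)$ is obtained as the componentwise difference of solutions of $H(w_1,w_2,G)$ and $H(w_1,w_3,G)$, and Theorem~\ref{trees} (legitimately invoked because $w_3\neq{\bf 0}$) converts this back into reachability. That is verbatim the route the paper takes, and it is correct.

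The necessity direction is where you diverge, and the step you flagged as delicate is in fact the one that does not go through. The paper defines recurrence of $w_2$ as ``$w_2$ is reachable from every $w_3$ reachable from $w_1$,'' and you yourself use zero-move reachability when you take $w_3=w_1$ to derive (1). But then, to rule out $w_2={\bf 0}$, you assert that ${\bf 0}$ ``cannot be revisited after a nontrivial excursion and hence is not recurrent'' --- this quietly switches to a return-after-at-least-one-step notion of recurrence, which is inconsistent with the zero-move convention you just used. Under the paper's stated definition, if ${\bf 0}$ is reachable from $w_1$ then \emph{every} state $w_3$ reachable from $w_1$ can reach ${\bf 0}$ (for $w_3\neq{\bf 0}$ by exactly your subtraction argument plus Theorem~\ref{trees}; for $w_3={\bf 0}$ trivially), so ${\bf 0}$ would satisfy the recurrence property and no contradiction arises. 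Your argument therefore does not establish the necessity of (2) in the case $w_2={\bf 0}$; to be fair, the paper itself dismisses necessity as ``trivial'' and does not address this edge case either, so what you have uncovered is less a fixable slip in your write-up than a point where the theorem's ``only if'' direction needs either an explicit exclusion of $w_2={\bf 0}$ or a sharper definition of recurrence.
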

\begin{proof} 
 Necessity of the two conditions is trivial. Suppose therefore that conditions (1) and (2) are satisfied, and let $w_{3}\in {\bf Z}_{p}^{n}$
be a state reachable from $w_{1}$. State $w_{3}\neq {\bf 0}$ because of condition (2). On the other hand let $Y_{1}$ be a solution of the system $H(G,w_{1},w_{3})$ 
and $Y_{2}$ be a solution of the system $H(G,w_{1},w_{2})$. One can immediately verify that $Y=Y_{2}-Y_{1}$ (where the difference 
is taken component-wise in ${\bf Z}_{p}$) is a solution of the system $H(G,w_{3},w_{2})$. Applying Theorem~\ref{trees} we infer that 
$w_{2}$ is reachable from $w_{3}$. 
\end{proof}

\begin{corollary} 
 Consider the Markov Chain specified by running the ${\bf Z}_{p}$-annihilating random walk on a good hypergraph $G$.
\begin{enumerate} 
 \item Transient states for the dynamics are those states $0\neq w\in {\bf Z}_{p}^{n}$ such that system $H(G,w,0)$ is solvable. 
\item All other states are either recurrent or inaccessible, depending on the starting point for the dynamics. 
\end{enumerate}
 
\end{corollary}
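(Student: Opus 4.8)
The plan is to deduce the Corollary directly from Theorem~\ref{trees} and Theorem~\ref{recurrence}, reading ``transient'' and ``recurrent'' in the usual finite‑Markov‑chain sense (a state is transient if some state reachable from it cannot reach it back; recurrent if every reachable state can). Two elementary structural facts do all the extra work. First, the all‑zeros configuration ${\bf 0}$ is absorbing: in ${\bf 0}$ every vertex carries $b_v=0$, so no legal move exists by Definition~\ref{maindef}; hence the only state reachable from ${\bf 0}$ is ${\bf 0}$ itself. Second, solvability of $H(G,w_1,w_2)$ depends on $(w_1,w_2)$ only through $w_1-w_2$: by~(\ref{diff}) the left‑hand sides form a fixed ${\bf Z}_p$‑linear function of the unknowns $X_{e,v}$, so $H(G,w_1,w_2)$ is solvable iff $w_1-w_2$ lies in a fixed ${\bf Z}_p$‑subspace $\mathcal{I}$ (the image of that coefficient map) — this is exactly the observation already exploited in the proof of Theorem~\ref{recurrence} when forming $Y=Y_2-Y_1$. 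In particular ${\bf 0}\in\mathcal{I}$.

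For part (1) I prove the equivalence in both directions. If $w\neq{\bf 0}$ and $H(G,w,{\bf 0})$ is solvable, then by Theorem~\ref{trees} the state ${\bf 0}$ is reachable from $w$; since ${\bf 0}$ is absorbing and $w\neq{\bf 0}$, the state $w$ is not reachable back from ${\bf 0}$, so $w$ is transient. Conversely, if $w\neq{\bf 0}$ and $H(G,w,{\bf 0})$ is not solvable, apply Theorem~\ref{recurrence} with $w_1=w_2=w$ (legitimate since $w\neq{\bf 0}$): condition~(1) holds trivially, and condition~(2) holds because, by Theorem~\ref{trees}, ${\bf 0}$ reachable from $w$ would force $H(G,w,{\bf 0})$ solvable. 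Hence $w$ is recurrent for the dynamics started at $w$; unwinding the definition of RECURRENCE, every state reachable from $w$ can reach $w$, so $w$ is recurrent and not transient. Note this characterisation makes no reference to a starting point.

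For part (2) the remaining states are ${\bf 0}$ and the nonzero $w$ with $H(G,w,{\bf 0})$ unsolvable, i.e. $w\notin\mathcal{I}$. The state ${\bf 0}$ is absorbing, hence recurrent whenever it is visited and inaccessible otherwise. So fix $w\neq{\bf 0}$ with $w\notin\mathcal{I}$ and an arbitrary starting configuration $w_1$. If $w$ is not reachable from $w_1$ it is inaccessible for that run, and we are done. Otherwise $w_1-w\in\mathcal{I}$ by Theorem~\ref{trees}; since $\mathcal{I}$ is a subspace and $w\notin\mathcal{I}$, we get $w_1=(w_1-w)+w\notin\mathcal{I}$, so $H(G,w_1,{\bf 0})$ is unsolvable (in particular $w_1\neq{\bf 0}$, as ${\bf 0}\in\mathcal{I}$), hence ${\bf 0}$ is not reachable from $w_1$. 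Now Theorem~\ref{recurrence} applies with this $w_1$ and $w_2=w$: condition~(1) is the hypothesis that $w$ is reachable from $w_1$, condition~(2) is what we just showed, so $w$ is recurrent for the dynamics started at $w_1$. Thus every such $w$ (and likewise ${\bf 0}$) is recurrent or inaccessible according to the starting point.

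The whole argument is bookkeeping on top of Theorems~\ref{trees} and~\ref{recurrence}; the only point that needs a shade of care — and it is minor — is the linear‑subspace step in part (2): one must record explicitly that $H(G,\cdot,\cdot)$‑solvability is membership of a difference in the fixed subspace $\mathcal{I}$, so that reachability of $w$ from $w_1$ together with non‑solvability of $H(G,w,{\bf 0})$ propagates to non‑solvability of $H(G,w_1,{\bf 0})$. That is precisely what lets us invoke Theorem~\ref{recurrence} from an arbitrary accessible starting point rather than only from $w$ itself, and it is where the ``depending on the starting point'' phrase in the statement gets its content.
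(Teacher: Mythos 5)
Your proof is correct and is essentially the derivation the paper intends (the corollary is stated there without proof): combine the linear-algebraic reachability criterion of Theorem~1 --- noting that solvability of $H$ depends only on the difference of the two states, so the solvable pairs form a coset condition over the fixed subspace $\mathcal{I}$ --- with Theorem~2 and the observation that $\mathbf{0}$ is absorbing. The only cosmetic point is in part~(2): the fact that $w_{1}\neq\mathbf{0}$ should be recorded \emph{before} invoking Theorem~1 (it follows immediately from $w$ being nonzero and reachable, since $\mathbf{0}$ is absorbing), rather than extracted afterwards from $w_{1}\notin\mathcal{I}$.
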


\section{Further Comments}

It would be interesting 
to extend the results on reachability and recurrence to general hypergraphs. Clearly some changes have to be made to the final result; we believe, though, that a connection with linear algebra ultimately exists. 
  
The other issue for further study raised by this paper, more interesting in light of the connection with annihilating random walks) is the dynamics of {\em modular lights-out games under random update}, seen as finite state Markov chains (see \cite{aldous-fill-book} Chapter 14 and \cite{istrate-balance} for related results).
Recent related results  considers random lights-out games \cite{random-lights-out} and random Seidel switching \cite{random-seidel-switching} on graphs. It would be interesting to complete the analysis in this paper with one of the convergence time 
of the associated Markov chain.

Finally, not that the antivoter model was used in the analysis of a randomized algorithm for 2-coloring a graph 
\cite{donnelly-welsh-coloring}. 
This was later extended to colorings with more than two colors or other restrictions (e.g. \cite{petford1989randomised,mcdiarmid1993random}, see also \cite{frieze2007survey}) and 2-colorings 
of hypergraphs. Whether cyclic antivoter models and related concepts are useful for analyzing  randomized coloring algorithms is an interesting 
issue. 

\section{Acknowledgment}

This work has been supported by CNCS IDEI Grant PN-II-ID-PCE-2011-3-0981 "Structure and computational difficulty in combinatorial optimization: an interdisciplinary approach".

\end{document}